\title[Degrees of symmetric Grothendieck polynomials]{Degrees of symmetric Grothendieck polynomials and Castelnuovo-Mumford regularity}
\author{Jenna Rajchgot }
\address[JR]{
Dept.~of Mathematics and Statistics, 
McMaster University, 
Hamilton, ON  \newline \indent  L8S 4K1, CANADA}
\email{rajchgoj@mcmaster.ca}
\thanks{Jenna Rajchgot was partially supported by NSERC Grant RGPIN-2017-05732.}
\author{Yi Ren}
\address[YR]{
Physical and Theoretical Chemistry Laboratory,
University of Oxford,
Oxford \newline \indent OX1 3QZ, UK}
\email{yi.ren@chem.ox.ac.uk}
\thanks{Yi Ren was supported by NSERC Grant RGPIN--2017-05732.}
\author{Colleen Robichaux}
\address[CR]{
Dept.~of Mathematics,
University of Illinois at Urbana-Champaign,
Urbana, IL \newline \indent 61801, USA}
\email{cer2@illinois.edu}
\thanks{Colleen Robichaux was supported by the National Science Foundation Graduate Research Fellowship Program under Grant No. DGE 1746047.}
\author{Avery St.~Dizier}
\address[AS]{
Dept.~of Mathematics,
Cornell University,
Ithaca, NY 14853, USA}
\email{ajs624@cornell.edu}
\thanks{}
\author{Anna Weigandt}
\address[AW]{
Dept.~of Mathematics,
University of Michigan,
Ann Arbor, MI 48109, USA}
\email{weigandt@umich.edu}
\thanks{}
\date{\today}
\newtheorem{theorem}{Theorem}[section]
\newtheorem{lemma}[theorem]{Lemma}
\newtheorem{conjecture}[theorem]{Conjecture}
\newtheorem{corollary}[theorem]{Corollary}
\newtheorem{proposition}[theorem]{Proposition}
\theoremstyle{definition}
\newtheorem{definition}[theorem]{Definition}
\newtheorem{example}[theorem]{Example}
\newcommand{\cellsize}{19}
\newlength{\cellsz} \setlength{\cellsz}{\cellsize\unitlength}
\newsavebox{\cell}
\sbox{\cell}{\begin{picture}(\cellsize,\cellsize)
	\put(0,0){\line(1,0){\cellsize}}
	\put(0,0){\line(0,1){\cellsize}}
	\put(\cellsize,0){\line(0,1){\cellsize}}
	\put(0,\cellsize){\line(1,0){\cellsize}}
	\end{picture}}
\newcommand\cellify[1]{\def\thearg{#1}\def\nothing{}%
	\ifx\thearg\nothing
	\vrule width0pt height\cellsz depth0pt\else
	\hbox to 0pt{\usebox{\cell} \hss}\fi%
	\vbox to \cellsz{
		\vss
		\hbox to \cellsz{\hss$#1$\hss}
		\vss}}
\newcommand\tableau[1]{\vtop{\let\\\cr
		\baselineskip -16000pt \lineskiplimit 16000pt \lineskip 0pt
		\ialign{&\cellify{##}\cr#1\crcr}}}
\newcommand\mydef[1]{{\bf #1}}
\newcommand\sylv[1]{{\tt sv}(#1)}
\newcommand{\gTab}{{\sf Tab}}
\newcommand \partitions[1]{\mathcal P_{#1}}
\begin{document}

\maketitle

\begin{abstract}We give an explicit formula for the degree of the Grothendieck polynomial of a Grassmannian permutation and a closely related formula for the Castelnuovo-Mumford regularity of the Schubert determinantal ideal of a Grassmannian permutation. 
We then provide a counterexample to a conjecture of Kummini-Lakshmibai-Sastry-Seshadri on a formula for regularities of standard open patches of particular Grassmannian Schubert varieties and show that our work gives rise to an alternate explicit formula in these cases. We end with a new conjecture on the regularities of standard open patches of \emph{arbitrary} Grassmannian Schubert varieties.  
\end{abstract}

\section{Introduction}\label{sec:intro}
	
	Lascoux and Sch\"utzenberger \cite{LS82} introduced  \emph{Grothendieck polynomials} to study the K-theory of flag varieties. 
	Grothendieck polynomials have a recursive definition, using divided difference operators. The symmetric group $S_n$ acts on the polynomial ring $\mathbb Z[x_1,x_2,\ldots,x_n]$  by permuting indices. Let $s_i$ be the simple transposition in $S_n$ exchanging $i$ and $i+1$. Then define operators on $\mathbb Z[x_1,x_2,\ldots,x_n]$ 
 \[\partial_i=\frac{1-s_i}{x_i-x_{i+1}} \text{ and } \pi_i=\partial_i(1-x_{i+1}).\]     
	Write  $w_0=n \, n-1 \, \ldots \, 1$ for the \mydef{longest permutation} in $S_n$ (in one-line notation) and take 
   \[\mathfrak G_{w_0}(x_1,x_2,\ldots,x_n)=x_1^{n-1}x_2^{n-2}\cdots x_{n-1}.\]
	Let $w_i:=w(i)$ for $i\in[n]$. Then if $w_i>w_{i+1}$, we define $\mathfrak G_{s_iw}=\pi_i(\mathfrak G_w)$.  We call $\{\mathfrak G_w:w\in S_n\}$ the set of \mydef{Grothendieck polynomials}.  Since the $\pi_i$'s satisfy the same braid and commutation relations as the simple transpositions, each $\mathfrak G_w$ is well defined. 
	
Grothendieck polynomials are generally inhomogeneous.  The lowest degree of the terms in $\mathfrak G_w$  is given by the \emph{Coxeter length} of $w$. The degree (i.e.\ highest degree of the terms) of $\mathfrak G_w$ can be described combinatorially in terms of pipe dreams (see \cite{FominKrillov,KM.Subword}), but this description is not readily computable. We seek an explicit combinatorial formula. 
In this paper, we give such an expression in the Grassmannian case.  Our proof relies on a formula of Lenart \cite{Le00}.

	One motivation for wanting easily-computable formulas for degrees of Grothendieck polynomials (for large classes of $w\in S_n$) comes from commutative algebra: formulas for degrees of Grothendieck polynomials give rise to closely related formulas for   \emph{Castelnuovo-Mumford regularity} of associated Schubert determinantal ideals. Recall that Castelnuovo-Mumford regularity is an invariant of a homogeneous ideal related to its minimal free resolution (see Section~\ref{sect:reg} for definitions).  Formulas for regularities of Schubert determinantal ideals yield formulas for regularities of certain well-known classes of generalized determinantal ideals in commutative algebra. 
For example, among the Schubert determinantal ideals are ideals of $r\times r$ minors of an $n\times m$ matrix of indeterminates and one sided ladder determinantal ideals.  
Furthermore, many other well-known classes of generalized determinantal ideals can be viewed as defining ideals of Schubert varieties intersected with opposite Schubert cells, so degrees of specializations of \emph{double Grothendieck polynomials} govern Castelnuovo-Mumford regularities in these cases. Thus, one purpose of this paper is to suggest a purely combinatorial approach to studying regularities of certain classes of generalized determinantal ideals. 
	
	\section{Background on Permutations}
	We start by recalling some background on the symmetric group.  We follow \cite{Manivel} as a reference.  Let $S_n$ denote the \mydef{symmetric group} on $n$ letters, i.e.\ the set of bijections from the set $[n]:=\{1,2,\ldots, n\}$ to itself.  We typically represent permutations in one-line notation.  The \textbf{permutation matrix} of $w$, also denoted by $w$, is the matrix which has a $1$ at $(i,w_i)$ for all $i\in[n]$, and zeros elsewhere. 
	
	The \mydef{Rothe diagram} of $w$ is the subset of cells in the $n\times n$ grid
	\[ D(w)= \{(i, j) \  \mid \ 1 \leq i, j \leq  n,\  w_i > j, \text{ and } w^{-1}_j > i\}.\]
	Graphically, $D(w)$ is the set of cells in the grid which remain after plotting the points $(i,w_i)$ for each $i\in[n]$ and striking out any boxes which appear weakly below or weakly to the right of these points.
The \mydef{essential set} of $w$ is the subset of the diagram
\[\mathcal{E}ss(w)=\{(i,j)\in D(w) \ \mid \ (i+1,j),(i,j+1)\not \in D(w)\}.\]  
	Each permutation has an associated \mydef{rank function} defined by \[r_w(i,j)=|\{(i',w_{i'}) \ \mid  \ i'\leq i,w_{i'} \leq j \}|.\]
 We write $\ell(w):=|D(w)|$ for the {\bf Coxeter length} of $w$.

\begin{example}\label{ex:Rothe}
If 
$w=63284175 \in {S}_{8}$ (in one-line notation)
then $D(w)$ is the following: 
\[
\begin{tikzpicture}[scale=.4]
\draw (0,0) rectangle (8,8);

\draw (0,7) rectangle (1,8);
\draw (1,7) rectangle (2,8);
\draw (2,7) rectangle (3,8);
\draw (3,7) rectangle (4,8);
\draw (4,7) rectangle (5,8);

\draw (0,6) rectangle (1,7);
\draw (1,6) rectangle (2,7);

\draw (0,5) rectangle (1,6);
\draw (0,3) rectangle (1,4);

\draw (0,4) rectangle (1,6);
\draw (4,4) rectangle (5,5);
\draw (3,4) rectangle (4,5);

\draw (6,4) rectangle (7,5);

\draw (4,1) rectangle (5,2);

\filldraw (5.5,7.5) circle (.5ex);
\draw[line width = .2ex] (5.5,0) -- (5.5,7.5) -- (8,7.5);
\filldraw (2.5,6.5) circle (.5ex);
\draw[line width = .2ex] (2.5,0) -- (2.5,6.5) -- (8,6.5);
\filldraw (1.5,5.5) circle (.5ex);
\draw[line width = .2ex] (1.5,0) -- (1.5,5.5) -- (8,5.5);
\filldraw (7.5,4.5) circle (.5ex);
\draw[line width = .2ex] (7.5,0) -- (7.5,4.5) -- (8,4.5);
\filldraw (3.5,3.5) circle (.5ex);
\draw[line width = .2ex] (3.5,0) -- (3.5,3.5) -- (8,3.5);
\filldraw (0.5,2.5) circle (.5ex);
\draw[line width = .2ex] (0.5,0) -- (0.5,2.5) -- (8,2.5);
\filldraw (6.5,1.5) circle (.5ex);
\draw[line width = .2ex] (6.5,0) -- (6.5,1.5) -- (8,1.5);
\filldraw (4.5,0.5) circle (.5ex);
\draw[line width = .2ex] (4.5,0) -- (4.5,0.5) -- (8,0.5);
\end{tikzpicture}
\]
Here ${\mathcal E}ss(w)=\{(1,5), (2,2), (4,5),(4,7), (5,1), (7,5)\}$.
\end{example}

	\section{Grassmannian Grothendieck Polynomials}
	A \mydef{partition} is a weakly decreasing sequence of nonnegative integers $\lambda=(\lambda_1,\lambda_2,\ldots,\lambda_k)$. We define the \mydef{length} of $\lambda$ to be $\ell(\lambda)=|\{h\in[k] \mid \lambda_h\neq 0\}|$ and the \mydef{size} of $\lambda$, denoted $|\lambda|$, to be $\sum_{i=1}^k\lambda_i$. 
  Write $\partitions{k}$ for the set of partitions of length at most $k$.
	Here, we conflate partitions with their Young diagrams, i.e.\ the notation $(i,j)\in \lambda$ indicates choosing the $j$th box in the $i$th row of the Young diagram of $\lambda$.  
  
  We say $w\in S_n$ has a \mydef{descent} at position $k$ if $w_k>w_{k+1}$.
	A permutation $w\in S_n$ is \mydef{Grassmannian} if $w$ has a unique descent.
	To each Grassmannian permutation $w$, we can uniquely associate a partition $\lambda\in \partitions{k}$: \[\lambda=(w_k-k,\ldots,w_1-1),\] where $k$ is the position of the descent of $w$.

  Let $w_{\lambda}$ denote the Grassmannian permutation associated to $\lambda$. It is easy to check that
	\begin{equation}\label{eq:sameLen}
	    |\lambda|=\ell(w_{\lambda})=|D(w_{\lambda})|.
	\end{equation} 

Define ${\sf YTab}(\lambda)$ to be the set of fillings of $\lambda$ with entries in $[k]$ so that
\begin{itemize}
\item entries weakly increase from left-to-right along rows and
\item entries strictly increase from top-to-bottom along columns.
\end{itemize}
For a partition $\lambda$, the \mydef{Schur polynomial} in  $k$ variables is
\[s_{\lambda}(x_1,x_2,\ldots,x_k)=\sum_{T\in {\sf YTab(\lambda)}}\prod_{i=1}^k x_i^{\# i\text{ 's in }T}.\]

	\begin{definition}
		Let $\lambda,\mu\in \partitions{k}$ so that $\lambda\subseteq \mu$. Denote by $\gTab(\mu/\lambda)$ the set of fillings of the skew shape $\mu/\lambda$ with entries in $[k]$ such that 
		\begin{itemize}
			\item  entries strictly increase left-to-right in each row,
		    \item entries strictly increase top-to-bottom in each column, and
			\item entries in row $i$ are at most $i-1$ for each $i\in[k]$.
		\end{itemize}
	\end{definition}

For ease of notation, let $\mathfrak G_{{\lambda}}:=\mathfrak G_{w_{\lambda}}$.

	\begin{theorem}\cite[Theorem~2.2]{Le00}\label{thm:gGrass}
		For a Grassmannian permutation $w_{\lambda}\in S_n$,
		\begin{align*}\label{equation:gGrass}
		\mathfrak G_{{\lambda}}(x_1,x_2,\ldots,x_k)=\sum_{\substack{\mu\in \partitions{k}\\ \lambda\subseteq\mu}}a_{\lambda\mu}s_{\mu}(x_1,x_2,\ldots,x_k)
		\end{align*}
		where $(-1)^{|\mu|-|\lambda|}a_{\lambda,\mu}=| \gTab(\mu/\lambda)|$ and $k$ is the unique descent of $w_\lambda$.
	\end{theorem}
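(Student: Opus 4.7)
The plan is to combine Buch's combinatorial formula for Grassmannian Grothendieck polynomials with an ``uncrowding'' bijection that peels a single-valued semistandard piece off each set-valued cell, recording the leftover data as an elegant filling. First I would invoke Buch's theorem to write
\[\mathfrak G_\lambda(x_1,x_2,\ldots,x_k) \;=\; \sum_T (-1)^{|T|-|\lambda|}\,\mathbf{x}^T,\]
where $T$ ranges over \emph{set-valued} semistandard Young tableaux of shape $\lambda$ with entries in $[k]$, $|T|=\sum_{(i,j)\in\lambda}|T(i,j)|$ is the total number of entries, and $\mathbf{x}^T=\prod_{i=1}^k x_i^{\#\,i\text{'s in }T}$. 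The goal is then to re-index this signed sum so that the monomial weight comes entirely from an ordinary SSYT.

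Next I would construct a bijection $T \longleftrightarrow (U,E)$ between set-valued SSYT $T$ of shape $\lambda$ with entries in $[k]$ and pairs $(U,E)$ in which $U\in{\sf YTab}(\mu)$ for some $\mu\in\partitions{k}$ with $\lambda\subseteq\mu$ and $E\in\gTab(\mu/\lambda)$, satisfying $\mathbf{x}^T=\mathbf{x}^U$ and $|T|=|\mu|$. The procedure would process set-valued cells in a fixed order: whenever a cell $(i,j)$ contains entries $a_1<a_2<\cdots<a_m$, keep $a_1$ in place and ``uncrowd'' $a_2,\ldots,a_m$ by a column-insertion bumping routine that pushes them into newly created cells strictly below row $i$, enlarging the shape to some $\mu\supseteq\lambda$. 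The recording tableau $E$ would mark each new cell of $\mu/\lambda$ with the row its occupant was bumped from. The key combinatorial claim is that the resulting $E$ automatically satisfies the three defining conditions of $\gTab(\mu/\lambda)$; in particular the bound ``entries in row $i$ are at most $i-1$'' corresponds to the fact that bumped entries always travel strictly downward.

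Substituting back into Buch's formula and grouping by $\mu$ then gives
\[\mathfrak G_\lambda \;=\; \sum_{\substack{\mu\in\partitions{k}\\ \lambda\subseteq\mu}}\sum_{U\in{\sf YTab}(\mu)}\sum_{E\in\gTab(\mu/\lambda)} (-1)^{|\mu|-|\lambda|}\,\mathbf{x}^U \;=\; \sum_{\substack{\mu\in\partitions{k}\\ \lambda\subseteq\mu}} (-1)^{|\mu|-|\lambda|}\,|\gTab(\mu/\lambda)|\, s_\mu(x_1,\ldots,x_k),\]
which is the stated expansion with $a_{\lambda\mu}=(-1)^{|\mu|-|\lambda|}|\gTab(\mu/\lambda)|$. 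The main obstacle will be the second step: writing down the bumping rule precisely enough that $U$ is semistandard, $E$ satisfies all the elegance conditions (the row-bound $\leq i-1$ being the subtlest), and the whole construction is reversible from $(U,E)$ back to $T$ without collisions between entries bumped into the same cell. If pinning down such a bijection proves awkward, a fallback strategy would be to verify instead that both sides satisfy the same Pieri-type recursion in $\lambda$ and then induct on $|\lambda|$.
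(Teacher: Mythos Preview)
The paper does not prove this theorem at all: it is quoted verbatim as \cite[Theorem~2.2]{Le00} and used as a black box in the proof of Theorem~\ref{thm:Grass}. So there is no ``paper's own proof'' to compare your proposal against.

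That said, your strategy is a sound and well-known route to Lenart's formula. The uncrowding bijection you describe between set-valued semistandard tableaux of shape $\lambda$ and pairs $(U,E)$ with $U\in{\sf YTab}(\mu)$ and $E\in\gTab(\mu/\lambda)$ does exist in the literature and does exactly what you need; the weight-preservation $\mathbf{x}^T=\mathbf{x}^U$ and the size identity $|T|=|\mu|$ follow immediately, and grouping by $\mu$ recovers the Schur expansion with the claimed coefficients. One small point of care: Buch's set-valued tableau formula postdates Lenart's paper, so this is a reproof rather than a reconstruction of Lenart's original argument, but that is immaterial here. Your identification of the delicate step is accurate: the reversibility of the bumping and the verification that the recording tableau lands in $\gTab(\mu/\lambda)$ (especially the row bound $\leq i-1$) are where the work lies, but both are manageable and have been written down carefully elsewhere.
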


	\begin{example}
		The Grassmannian permutation $w=2 4 8 1 3 5 6 7$ corresponds to $\lambda=(5,2,1)$. 
		By Theorem~\ref{thm:gGrass},
		\[\mathfrak G_{(5,2,1)}(x_1,x_2,x_3)=s_{(5,2,1)}-2s_{(5,2,2)}-s_{(5,3,1)}+2s_{(5,3,2)}-s_{(5,3,3)}.\]
		This corresponds to the tableaux:
		\[
		\ytableausetup
		{boxsize=0.9em}
		\begin{ytableau}
		*(lightgray)  & *(lightgray)  & *(lightgray) & *(lightgray) & *(lightgray)  \\
		*(lightgray)  & *(lightgray) \\
		*(lightgray)     
		\end{ytableau}
		\quad
		\begin{ytableau}
		*(lightgray)  & *(lightgray)  & *(lightgray) & *(lightgray) & *(lightgray)  \\
		*(lightgray)  & *(lightgray) \\
		*(lightgray)  & \scriptstyle1   
		\end{ytableau}
		\quad
		\begin{ytableau}
		*(lightgray)  & *(lightgray)  & *(lightgray) & *(lightgray) & *(lightgray)  \\
		*(lightgray)  & *(lightgray) \\
		*(lightgray)  & \scriptstyle2 
		\end{ytableau}
		\quad
		\begin{ytableau}
		*(lightgray)  & *(lightgray)  & *(lightgray) & *(lightgray) & *(lightgray) \\
		*(lightgray)  & *(lightgray) & \scriptstyle1\\
		*(lightgray)     
		\end{ytableau}
		\quad
		\begin{ytableau}
		*(lightgray)  & *(lightgray)  & *(lightgray) & *(lightgray) & *(lightgray) \\
		*(lightgray)  & *(lightgray) & \scriptstyle1\\
		*(lightgray)   & \scriptstyle1  
		\end{ytableau}
		\quad
		\begin{ytableau}
		*(lightgray)  & *(lightgray)  & *(lightgray) & *(lightgray) & *(lightgray) \\
		*(lightgray)  & *(lightgray) & \scriptstyle1\\
		*(lightgray)   & \scriptstyle2  
		\end{ytableau}
		\quad
		\begin{ytableau}
		*(lightgray)  & *(lightgray)  & *(lightgray) & *(lightgray) & *(lightgray) \\
		*(lightgray)  & *(lightgray) & \scriptstyle1\\
		*(lightgray)   & \scriptstyle 1 & \scriptstyle2  
		\end{ytableau}
		\]
	\end{example}

	\medskip
	
	\begin{definition}
		We say a partition $\mu$ is \mydef{maximal} for $\lambda$ if $\gTab(\mu/\lambda)\neq\emptyset$ and  $\gTab(\nu/\lambda)=\emptyset$ whenever $|\nu|>|\mu|$.
	\end{definition}

The following lemma can be obtained from the proof of \cite[Theorem~2.2]{Le00}, but we include it for completeness.
	\begin{lemma}\label{lemma:maxLambdaRec}
		Fix a partition $\lambda\in {\mathcal P}_k$. Define $\mu$ by setting $\mu_1=\lambda_1$, and $\mu_i=\min\{\mu_{i-1},\lambda_{i}+(i-1)\}$ for each $1<i\leq k$. Then $\mu$ is the unique partition that is maximal for $\lambda$.
	\end{lemma}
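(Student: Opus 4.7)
The plan is to establish two complementary facts: (i) every partition $\nu \in \partitions{k}$ with $\gTab(\nu/\lambda) \neq \emptyset$ satisfies $\nu_i \leq \mu_i$ componentwise for all $i$, and (ii) $\gTab(\mu/\lambda) \neq \emptyset$. Together these yield $|\nu| \leq |\mu|$ for every valid $\nu$, with equality forcing $\nu = \mu$, which simultaneously proves maximality and uniqueness.

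For (i), I would induct on $i$. The row-entry restriction allows no entries in row $1$, so any skew filling forces $\nu_1 = \lambda_1 = \mu_1$. For $i > 1$, the skew portion of row $i$ consists of $\nu_i - \lambda_i$ cells that must be filled with strictly increasing entries drawn from $\{1, 2, \ldots, i-1\}$, giving $\nu_i \leq \lambda_i + (i-1)$. Since $\nu$ is a partition, $\nu_i \leq \nu_{i-1} \leq \mu_{i-1}$ by the inductive hypothesis. Combining, $\nu_i \leq \min(\mu_{i-1}, \lambda_i + i - 1) = \mu_i$. A parallel induction confirms that $\mu$ itself is a partition containing $\lambda$: the inequality $\mu_i \leq \mu_{i-1}$ is built into the recursion, and $\mu_i \geq \lambda_i$ follows from $\mu_{i-1} \geq \lambda_{i-1} \geq \lambda_i$ together with $\lambda_i + (i-1) \geq \lambda_i$.

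For (ii), I would exhibit the explicit canonical filling $T$ that assigns value $i + j - \mu_i - 1$ to cell $(i,j) \in \mu/\lambda$; equivalently, row $i$ reads $i-(\mu_i-\lambda_i), i-(\mu_i-\lambda_i)+1, \ldots, i-1$ from left to right across its skew strip. The entries lie in $[i-1]$ since $\mu_i - \lambda_i \leq i-1$; rows strictly increase by construction; and for two skew cells $(i-1, j)$ and $(i, j)$ sitting in the same column, the value at $(i,j)$ exceeds that at $(i-1,j)$ by $1 + \mu_{i-1} - \mu_i \geq 1$, giving column strictness.

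The argument is largely bookkeeping, and the main obstacle is simply isolating the right canonical filling and verifying its column strictness against the recursion. Once (i) and (ii) are in hand, uniqueness is automatic: for any maximal $\nu$, part (i) gives $\nu_i \leq \mu_i$ for all $i$ and hence $|\nu| \leq |\mu|$, while part (ii) forces $|\nu| \geq |\mu|$; equality of the sums with every coordinate bounded above by the corresponding $\mu_i$ forces $\nu_i = \mu_i$ for each $i$.
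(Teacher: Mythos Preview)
Your proposal is correct and follows essentially the same approach as the paper: you bound any valid $\nu$ componentwise by $\mu$ using the row-entry constraint, then exhibit the identical explicit filling $T(i,j)=i+j-\mu_i-1$ and verify column strictness via $T(i,j)-T(i-1,j)=\mu_{i-1}-\mu_i+1>0$. Your write-up is slightly more detailed (you make the induction on $i$ explicit, verify that $\mu$ is a partition containing $\lambda$, and spell out the uniqueness argument), but the underlying ideas are the same.
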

	\begin{proof}
		Let $\rho$ be any partition with $\gTab(\rho/\lambda)\neq\emptyset$. Since elements of $\gTab(\rho/\lambda)$ have strictly increasing rows, $\rho/\lambda$ has at most $i-1$ boxes in row $i$ for each $i$. That is, $\rho_i\leq \lambda_i+(i-1)$ for each $i$. It follows that $\rho_i\leq \mu_i$ for each $i$. Thus, uniqueness of $\mu$ will follow once we show that $\mu$ is maximal for $\lambda$. It suffices to produce an element $T\in\gTab(\mu/\lambda)$.
		
		We will denote by $T(i,j)$ the filling by $T$ of the box in row $i$ and column $j$ of $\mu$. For each $i$ and $j$ with $\lambda_i<j\leq\mu_i$, set 
		\[T(i,j)=i+j-\mu_i-1.\] 
		It is easily seen that $T$ strictly increases along rows with $T(i,j)\in[i-1]$ for each $i$. To see that $T\in\gTab(\mu/\lambda)$, it remains to note that $T$ strictly increases down columns. Observe
		\[
		T(i,j)-T(i-1,j)  
		= \mu_{i-1}-\mu_i+1
		>0. \qedhere
		\]
	\end{proof}

	\begin{example}\label{ex:sylvDecompFill}
		If $\lambda=(10,10,9,7,7,2,1)$, the unique partition $\mu$ maximal for $\lambda$ is $\mu=(10,10,10,10,10,7,7)$. Below is the tableau $T\in \gTab(\mu/\lambda)$ constructed in the proof of Lemma~\ref{lemma:maxLambdaRec}.
		\[\ytableausetup
		{boxsize=1em}
		\ytableausetup{notabloids}
		\begin{ytableau}
		*(lightgray)  &  *(lightgray) & *(lightgray)  & *(lightgray)  &  *(lightgray)& *(lightgray) & *(lightgray)& *(lightgray) & *(lightgray)& *(lightgray) \\
		*(lightgray)  &  *(lightgray) & *(lightgray)  & *(lightgray)  &  *(lightgray)& *(lightgray) & *(lightgray)& *(lightgray) & *(lightgray)& *(lightgray) \\
		*(lightgray)  &  *(lightgray) & *(lightgray)  & *(lightgray)  &  *(lightgray)& *(lightgray) & *(lightgray)& *(lightgray) & *(lightgray) & {2} \\
		*(lightgray)  & *(lightgray)  &  *(lightgray) & *(lightgray)  & *(lightgray)  &  *(lightgray)& *(lightgray)& {1} & {2} & {3}\\
		*(lightgray)  & *(lightgray)  &  *(lightgray) & *(lightgray)  & *(lightgray)  &  *(lightgray)& *(lightgray)& {2} & {3}& {4} \\
		*(lightgray)  &  *(lightgray)  & {1} & {2} & {3} & {4} & {5} \\
		*(lightgray)   & {1} & {2} & {3} & {4} & {5}& {6}
		\end{ytableau}
		\]
	\end{example}
	
	\medskip

\begin{definition}
		Given a partition $\lambda=(\lambda_1,\ldots,\lambda_k)$,    	let $P(\lambda)=(P_1,P_2,\ldots,P_{r})$  be the set partition of $[k]$ such that $i,j\in P_h$ if and only if
		$\lambda_i=\lambda_j$, and $\lambda_i>\lambda_j$ whenever $i\in P_{h}$ and $j\in P_{l}$ with $h<l$. 
	\end{definition}
	Note that if $\lambda=(\lambda_1,\ldots,\lambda_k)=(\lambda_{i_1}^{p_1},\ldots,\lambda_{i_r}^{p_r})$	in exponential notation, then $p_h=|P_h|$ for each $h\in[r]$.
	In the following definition, we describe a decomposition of $\lambda$ into rectangles.
	\begin{definition}
		Let $\lambda=(\lambda_1,\ldots,\lambda_k)$ be a partition and $P(\lambda)=(P_1,P_2,\ldots,P_{r})$. Set $m_h=\min P_h$ for each $h$. Define $R(\lambda)=(R_1,R_2,\ldots,R_{r})$ by setting
		\[R_h:=\left\{(i,j)\in \lambda \ \mid \ i\in \bigcup_{l=1}^h P_{l}\mbox{ and } \lambda_{m_{h+1}}<j\leq \lambda_{m_{h}}\right\},\]
		where we take $\lambda_{m_{r+1}}:=0$.
	\end{definition}

  Set $\lambda^{(h)}$ to be the partition
	\[\lambda^{(h)}=\bigcup_{j=1}^{h}R_j\] 
	for $h\in[r]$. Equivalently, for $h\in[r-1]$, $\lambda^{(h)}=(\lambda_1-\lambda_i,\lambda_2-\lambda_i,\ldots,\lambda_{i-1}-\lambda_i)$ where $i=\min P_{h+1}$, and $\lambda^{(r)}=\lambda$. Set $\lambda^{(0)}:=\emptyset$.
  
	\begin{example}\label{ex:lambdaToRect}
		For $\lambda$ as in Example~\ref{ex:sylvDecompFill}, one has  $P_1=\{1,2\}$, $P_2=\{3\}$, $P_3=\{4,5\}$, $P_4=\{6\}$, and $P_5=\{7\}$. The sets in $R(\lambda)$ are outlined below, with $R_1$ the rightmost rectangle and $R_5$ the leftmost. Considering $h=2$, $\lambda^{(h)}=R_1\cup R_2=(10-7,10-7,9-7)=(3,3,2)$.
		
		\setlength{\unitlength}{.23mm}
		\[
		\begin{picture}(200,140)

		\thinlines
		\put(0,120){{\color{lightgray}\line(1,0){200}}}
		\put(0,100){{\color{lightgray}\line(1,0){200}}}
		\put(0,80){{\color{lightgray}\line(1,0){180}}}
		\put(0,60){{\color{lightgray}\line(1,0){140}}}
		\put(0,40){{\color{lightgray}\line(1,0){40}}}
		\put(0,20){{\color{lightgray}\line(1,0){40}}}
		\put(0,0){\color{lightgray}{\line(1,0){20}}}
		\put(60,40){{\color{lightgray}\line(0,1){100}}}
		\put(80,40){\color{lightgray}{\line(0,1){100}}}
		\put(100,40){\color{lightgray}{\line(0,1){100}}}
		\put(120,40){\color{lightgray}{\line(0,1){100}}}
		\put(160,80){\color{lightgray}{\line(0,1){60}}}
		
		\linethickness{0.5mm}
		\put(-1,140){\line(1,0){202}}
		\put(180,100){\line(1,0){21}}
		\put(140,80){\line(1,0){41}}
		\put(40,40){\line(1,0){101}}
		\put(20,20){\line(1,0){21}}
		\put(-1,0){\line(1,0){22}}
		\put(0,0){\line(0,1){141}}
		\put(20,0){\line(0,1){140}}
		\put(40,20){\line(0,1){120}}
		\put(140,40){\line(0,1){100}}
		\put(180,80){\line(0,1){60}}
		\put(200,100){\line(0,1){40}}
		
		\end{picture}
		\]
	\end{example}

	\begin{definition}
		For any $n\geq1$, let $\delta^{n}$ denote the \mydef{staircase shape} $\delta^{n}=(n,n-1,\ldots,1)$. Given a partition $\mu$, let
		\[\sylv{\mu}=\max\left\{k  \mid \delta^{k}\subseteq \mu\right\}.\] 
		The partition $\delta^{\sylv{\mu}}$ is called the \mydef{Sylvester triangle} of $\mu$.
	\end{definition}

	\begin{proposition}\label{prop:muTriang}
		Suppose $\mu$ is maximal for $\lambda$ and $P(\lambda)=(P_1,\ldots,P_r)$. If $i\in P_{h+1}$ for some $0\leq h<r$, then 
		\[\mu_i=\lambda_i+\sylv{\lambda^{(h)}}.\]
	\end{proposition}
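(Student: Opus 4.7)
The plan is to induct on $i$, using the recursion $\mu_1 = \lambda_1$ and $\mu_i = \min\{\mu_{i-1}, \lambda_i + i - 1\}$ from Lemma~\ref{lemma:maxLambdaRec}. The base case $i = 1$ falls in $P_1$ with $h = 0$, and since $\lambda^{(0)} = \emptyset$ has $\sylv{\emptyset} = 0$, the asserted formula reduces to $\mu_1 = \lambda_1$, which is exactly the initialization.

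For the inductive step, fix $i \in P_{h+1}$ with $i \geq 2$ and split into two cases. If $i > \min P_{h+1}$, then $i - 1$ also lies in $P_{h+1}$, so $\lambda_{i-1} = \lambda_i$, and the inductive hypothesis gives $\mu_{i-1} = \lambda_i + \sylv{\lambda^{(h)}}$. Since $\lambda^{(h)}$ has only $\min P_{h+1} - 1 < i - 1$ parts, we have $\sylv{\lambda^{(h)}} \leq i - 2$, and the recursion yields $\mu_i = \min\{\lambda_i + \sylv{\lambda^{(h)}},\, \lambda_i + i - 1\} = \lambda_i + \sylv{\lambda^{(h)}}$, as desired.

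The essential case is $i = \min P_{h+1}$ with $h \geq 1$, where $i - 1 = \max P_h$ and the inductive hypothesis gives $\mu_{i-1} = \lambda_{i-1} + \sylv{\lambda^{(h-1)}}$. Writing $c := \lambda_{i-1} - \lambda_i > 0$, the recursion becomes $\mu_i = \lambda_i + \min\{c + \sylv{\lambda^{(h-1)}},\, i - 1\}$, so the proposition reduces to the key identity
\[
\sylv{\lambda^{(h)}} = \min\{c + \sylv{\lambda^{(h-1)}},\ i - 1\}.
\]

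Proving this identity is the main obstacle. Setting $i' = \min P_h$, the alternative description of $\lambda^{(h)}$ from the excerpt gives
\[
\lambda^{(h)}_j = \lambda^{(h-1)}_j + c \text{ for } 1 \leq j \leq i' - 1, \qquad \lambda^{(h)}_j = c \text{ for } i' \leq j \leq i - 1.
\]
For the lower bound on $\sylv{\lambda^{(h)}}$, I exhibit $\delta^k \subseteq \lambda^{(h)}$ for $k = \min\{c + \sylv{\lambda^{(h-1)}},\, i - 1\}$ by checking $\lambda^{(h)}_j \geq k + 1 - j$ separately in the ranges $j \leq \sylv{\lambda^{(h-1)}}$ (where it comes from $\delta^{\sylv{\lambda^{(h-1)}}} \subseteq \lambda^{(h-1)}$ shifted by $c$) and $\sylv{\lambda^{(h-1)}} < j \leq k$ (where the bound $\lambda^{(h)}_j \geq c$ suffices). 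For the matching upper bound, suppose $\delta^k \subseteq \lambda^{(h)}$; then $k \leq i - 1$ is immediate from the length of $\lambda^{(h)}$, and I observe that necessarily $k - c \leq i' - 1$, for otherwise the row $j = i'$ of $\delta^k$ would demand $c = \lambda^{(h)}_{i'} \geq k + 1 - i' > c$. The top-row inequalities then read $\lambda^{(h-1)}_j \geq (k - c) + 1 - j$ for $j = 1, \ldots, k - c$, i.e., $\delta^{k-c} \subseteq \lambda^{(h-1)}$, giving $k \leq c + \sylv{\lambda^{(h-1)}}$ and completing the proof.
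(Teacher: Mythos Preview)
Your proof is correct and follows essentially the same approach as the paper: both arguments use the recursion of Lemma~\ref{lemma:maxLambdaRec} to reduce the statement to the key Sylvester identity $\sylv{\lambda^{(h)}}=\min\{(\lambda_{i-1}-\lambda_i)+\sylv{\lambda^{(h-1)}},\,i-1\}$, and then verify that identity. The only cosmetic differences are that you induct on $i$ (handling $i>\min P_{h+1}$ directly) while the paper inducts on $h$ after first observing that $P(\lambda)$ refines $P(\mu)$, and that you verify the Sylvester identity via row-by-row inequalities whereas the paper phrases the same check geometrically in terms of the rectangle $R_h$ and an explicit staircase embedding.
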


	\begin{proof}
		By Lemma~\ref{lemma:maxLambdaRec}, $\mu_1=\lambda_1$ and $\mu_i=\min\{\mu_{i-1},\lambda_{i}+(i-1)\}$ for $1<i\leq k$. Clearly $P(\lambda)$ refines $P(\mu)$: if $\lambda_i=\lambda_j$, then $\mu_i=\mu_j$. Example~\ref{ex:sylvDecompFill} shows this refinement can be strict. Hence, it suffices to prove the statement when $i=\min P_{h+1}$. We work by induction on $h$.
		
		When $h=0$, $i=\min(P_1)=1$. Since $\lambda_1=\mu_1$, the result follows. Suppose the claim holds for some $h-1$. We show the claim holds for $h$. Let $i=\min P_{h+1}$. Then it suffices to show that 
		\begin{equation}\label{eq:equalDef}
		\lambda_i+\sylv{\lambda^{(h)}}
		=\min\{\mu_{i-1},\lambda_{i}+(i-1)\}.
		\end{equation}
		Since $i=\min P_{h+1}$, it follows that $i-1\in P_{h}$. By applying the inductive assumption to $\mu_{i-1}$,
		\begin{equation}\label{eq:useInd}
		\min\{\mu_{i-1},\lambda_{i}+(i-1)\}=\min\{\lambda_{i-1}+\sylv{\lambda^{(h-1)}},\lambda_{i}+(i-1)\}.
		\end{equation}
		By Equations (\ref{eq:equalDef}) and (\ref{eq:useInd}), the proof is complete once we show
		\begin{equation}\label{eq:sylRecLam}
		\sylv{\lambda^{(h)}}=\min\{(\lambda_{i-1}-\lambda_{i})+\sylv{\lambda^{(h-1)}},i-1\}.
		\end{equation}
		
		Let $\omega, \ell$ respectively denote the (horizontal) width and (vertical) length of $R_{h}$, and set $M=\sylv{\lambda^{(h-1)}}$. Equation (\ref{eq:sylRecLam}) is equivalent to proving
		\[\sylv{\lambda^{(h)}}=\min\{\omega+M,\ell\}.\]
		By definition, 
		$\lambda^{(h)}=R_{h}\cup \lambda^{(h-1)}$, so it is straightforward to see that \[\sylv{\lambda^{(h)}}\leq\min\{\omega+M,\ell\}.\]
		Let $(M,c)$ be the southwest most box in the northwest most embedding of 
		$\delta^{M}\subseteq \lambda^{(h-1)}$, with the indexing inherited from $\lambda$.
		
		Suppose first that $\ell\geq \omega+M$. Since $R_{h}$ is a rectangle, $(\omega+M,c-\omega)\in \lambda^{(h)}$.
		Then $\delta^{\omega+M}\subseteq \lambda^{(h+1)}$ and Equation (\ref{eq:sylRecLam}) follows. Otherwise, it must be that $\ell-M< \omega$. Since $R_{h}$ is a rectangle, $(\ell,c-\ell+M)\in \lambda^{(h)}$. Thus, $\delta^\ell\subseteq \lambda^{(h+1)}$ and Equation (\ref{eq:sylRecLam}) follows.
	\end{proof}

	\begin{theorem}\label{thm:Grass}
		Suppose $w_{\lambda}\in S_n$ is a Grassmannian permutation. Let $P(\lambda)=(P_1,\ldots,P_{r})$. Then
		\[{\rm deg}(\mathfrak G_\lambda)=|\lambda|+\sum_{h\in[r-1]}|P_{h+1}|\cdot  \sylv{\lambda^{(h)}}.\]
	\end{theorem}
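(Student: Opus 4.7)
The plan is to combine Lenart's Schur expansion (Theorem~\ref{thm:gGrass}) with the preceding structural results on the unique maximal partition (Lemma~\ref{lemma:maxLambdaRec} and Proposition~\ref{prop:muTriang}). First, I would invoke Theorem~\ref{thm:gGrass} to write
\[\mathfrak G_{\lambda}(x_1,\ldots,x_k)=\sum_{\substack{\nu\in \partitions{k}\\ \lambda\subseteq\nu}}a_{\lambda,\nu}\,s_{\nu}(x_1,\ldots,x_k),\]
and note that each $s_\nu$ is homogeneous of degree $|\nu|$, while $a_{\lambda,\nu}\neq 0$ exactly when $\gTab(\nu/\lambda)\neq\emptyset$. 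Since $\{s_\nu : \nu\in\partitions{k}\}$ is $\mathbb{Z}$-linearly independent, no cancellation can occur in the top-degree homogeneous component; in particular, $\deg(\mathfrak G_\lambda)$ equals the maximum $|\nu|$ with $\gTab(\nu/\lambda)\neq\emptyset$. By Lemma~\ref{lemma:maxLambdaRec}, there is a unique such maximal $\mu$, so $\deg(\mathfrak G_\lambda)=|\mu|$.

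Next, I would compute $|\mu|$ by partitioning the index set $[k]$ along the blocks of $P(\lambda)=(P_1,\ldots,P_r)$. Applying Proposition~\ref{prop:muTriang}, for $i\in P_{h+1}$ we have $\mu_i=\lambda_i+\sylv{\lambda^{(h)}}$, and so
\[|\mu|=\sum_{h=0}^{r-1}\sum_{i\in P_{h+1}}\bigl(\lambda_i+\sylv{\lambda^{(h)}}\bigr)=|\lambda|+\sum_{h=0}^{r-1}|P_{h+1}|\cdot\sylv{\lambda^{(h)}}.\]
Since $\lambda^{(0)}=\emptyset$ and hence $\sylv{\lambda^{(0)}}=0$, the $h=0$ term drops out, yielding exactly the claimed identity.

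The heavy lifting has already been carried out in Lemma~\ref{lemma:maxLambdaRec} and Proposition~\ref{prop:muTriang}; the only subtle point that remains is the passage from the signed Schur expansion to a statement about the degree, which is handled by the uniqueness of the maximal $\mu$ together with linear independence of Schur polynomials. The rest is pure bookkeeping between the summation index $h$, the blocks $P_{h+1}$, and the partial shapes $\lambda^{(h)}$, so I do not anticipate any genuine obstacle in this last step.
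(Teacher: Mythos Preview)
Your proposal is correct and follows essentially the same route as the paper: invoke Theorem~\ref{thm:gGrass}, use Lemma~\ref{lemma:maxLambdaRec} to identify the unique maximal $\mu$ so that $\deg(\mathfrak G_\lambda)=|\mu|$, then read off $|\mu|$ from Proposition~\ref{prop:muTriang} and drop the $h=0$ term since $\sylv{\lambda^{(0)}}=0$. Your remark about linear independence of Schur polynomials is a fine way to justify that the top-degree component is nonzero, though with uniqueness of $\mu$ it is already immediate that this component is the single nonzero term $a_{\lambda\mu}s_\mu$.
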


	\begin{proof}
		By Theorem~\ref{thm:gGrass} and Lemma~\ref{lemma:maxLambdaRec}, the highest nonzero homogeneous component of $\mathfrak{G}_\lambda$ is $a_{\lambda\mu}s_{\mu}$ where $\mu$ is maximal for $\lambda$. Since $\deg(s_{\mu})$ is $|\mu|$,
		Proposition~\ref{prop:muTriang} implies the theorem, using the fact that $\sylv{\lambda^{(0)}}=0$.
	\end{proof}

\begin{example}\label{ex:thmCalc}
Returning to $\lambda$ as in Example~\ref{ex:sylvDecompFill}, Theorem~\ref{thm:Grass} states that ${\rm deg}(\mathfrak G_\lambda)=
|\lambda|+\sum_{h=1}^4|P_{h+1}|\cdot \sylv{\lambda^{(h)}}=46+(1\cdot 1+ 2\cdot 3+1\cdot 5 +1\cdot 6)=46+18=64$.

\end{example}

\section{Castelnuovo-Mumford regularity of Grassmannian matrix Schubert varieties}\label{sect:reg}

In this section, we recall some basics of Castelnuovo-Mumford regularity and then use Theorem~\ref{thm:Grass} to produce easily-computable formulas for the regularities of matrix Schubert varieties associated to Grassmannian permutations.

\subsection{Commutative algebra preliminaries}
Let $S = \mathbb{C}[x_1,\dots, x_n]$ be a positively $\mathbb{Z}^d$-graded polynomial ring so that the only elements in degree zero are the constants. 
The \textbf{multigraded Hilbert series} of a finitely generated graded module $M$ over $S$ is
\[
H(M; \mathbf{t}) = \sum_{\mathbf{a}\in \mathbb{Z}^d}\text{dim}_{K}(M_{\mathbf{a}})\mathbf{t}^{\mathbf{a}}
=\frac{\mathcal{K}(M;\mathbf{t})}{\prod_{i=1}^n(1-\mathbf{t}^{\mathbf{a_i}})},~~~\text{deg}(x_i) = \mathbf{a_i},
\]
where if $\mathbf{a}_i = (a_i(1),\dots, a_i(d))$, then $\mathbf{t}^{\mathbf{a}_i} = t_1^{a_i(1)}\cdots t_d^{a_i(d)}$. 
The numerator $\mathcal{K}(M;\bf{t})$ in the expression above is a Laurent polynomial in the $t_i$'s, called the $\mathbf{K}$\textbf{-polynomial} of $M$.
For more detail on $K$-polynomials, see \cite[Chapter 8]{Miller.Sturmfels}.

We are mostly interested in the case where $S$ is standard graded, that is, $\text{deg}(x_i) = 1$, and the case where $M = S/I$ where $I$ is a homogeneous ideal with respect to the standard grading. Note that, in this case, the $K$-polynomial is a polynomial in a single variable $t$. 
There is a minimal free resolution 
\[
 0 \rightarrow \bigoplus_jS(-j)^{\beta_{l,j}(S/I)}\rightarrow \bigoplus_jS(-j)^{\beta_{l-1,j}(S/I)}\rightarrow \cdots \rightarrow \bigoplus_jS(-j)^{\beta_{0,j}(S/I)} \rightarrow S/I \rightarrow 0
\]
where $l\leq n$ and $S(-j)$ is the free $S$-module obtained by shifting the degrees of $S$ by $j$. 
The \mydef{Castelnuovo-Mumford regularity} of $S/I$, denoted $\text{reg}(S/I)$, is defined as
\[
\text{reg}(S/I):=\text{max}\{j-i \mid \beta_{i,j}(S/I)\neq 0\}.
\]
This invariant is measure of complexity of $S/I$ and has multiple homological characterizations. For example, $\text{reg}(S/I)$ is the least integer $m$ for which $\text{Ext}^j(S/I,S)_n = 0$, for all $j$ and all $n\leq -m-j-1$ (see \cite[Proposition~20.16]{EisenbudCA}). We refer the reader to \cite[Chapter 20.5]{EisenbudCA} for more information on regularity. 

Let $\mathcal{K}(S/I;t)$ denote the $K$-polynomial of $S/I$ with respect to the standard grading. 
Assume that $S/I$ is Cohen-Macaulay and let $\text{ht}_S I$ denote the height of the ideal $I$. Then,
\begin{equation}\label{eq:mainRegEquation}
\text{reg}(S/I) = \text{deg }\mathcal{K}(S/I;t) - \text{ht}_S I.
\end{equation}
See, for example, \cite[Lemma~2.5]{Benedetti.Varbaro} and surrounding explanation. 
In this paper, we will use this characterization of regularity.

\subsection{Regularity of Grassmannian matrix Schubert varieties}

Let $X$ be the space of $n\times n$ matrices with entries in $\mathbb{C}$, let $\widetilde{X}=(x_{ij})$ denote an $n\times n$ generic matrix of variables, and let $S = \mathbb{C}[x_{ij}]$. Given an $n\times n$ matrix $M$, let $M_{[i,j]}$ denote the submatrix of $M$ consisting of the top $i$ rows and left $j$ columns of $M$. Given a permutation matrix  $w\in S_n$ we have the \textbf{matrix Schubert variety}
\[
X_w := \{M\in X\mid \text{rank } M_{[i,j]}\leq \text{rank } w_{[i,j]}\},
\]
which is an affine subvariety of  $X$ with defining ideal
\[
I_w := \langle (r_w(i,j)+1)-\text{size minors of } \widetilde{X}_{[i,j]}\mid (i,j)\in \mathcal{E}ss(w)\rangle \subseteq S.
\]
The ideal $I_w$, called a \textbf{Schubert determinantal ideal}, is prime \cite{Fulton.Flags} and is homogeneous with respect to the standard grading of $S$. 

By \cite[Theorem~A]{KM}, we have ${\mathcal K}(S/I_w;t)=\mathfrak G_w(1-t,\ldots,1-t)$, which has the same degree as $\mathfrak G_w(x_1,\ldots,x_n)$, 
since the coefficients in homogeneous components of single Grothendieck polynomials have the same sign (see, for example, \cite{KM}). Thus,
\begin{equation}\label{eq:prelimReg}
\text{reg}(S/I_w) = \text{deg } \mathfrak{G}_w(x_1,\dots, x_n) - \text{ht}_S I_w = \text{deg } \mathfrak{G}_w(x_1,\dots, x_n) - |D(w)|,
\end{equation}
where the second equality follows because 
\[\text{ht}_S I_w=\text{codim}_X X_w=|D(w)|\]
by \cite{Fulton.Flags}.
We now turn our attention to the case where $w$ is a Grassmannian permutation and retain the notation from the previous section.

\begin{corollary}\label{cor:grReg}
Suppose $w_{\lambda}\in S_n$ is a Grassmannian permutation. Let $P(\lambda)=(P_1,\ldots,P_{r})$. Then
		\[{\rm reg}(S/I_{w_\lambda})=\sum_{h\in [r-1]}|P_{h+1}|\cdot  \sylv{\lambda^{(h)}}.\]
\end{corollary}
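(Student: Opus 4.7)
The plan is to combine three ingredients that are already in place: the regularity formula \eqref{eq:prelimReg} for Schubert determinantal ideals, the identity $|\lambda|=|D(w_\lambda)|$ from \eqref{eq:sameLen}, and the degree formula from Theorem~\ref{thm:Grass}. Since Theorem~\ref{thm:Grass} and \eqref{eq:prelimReg} were both established above, the corollary should reduce to a one-line substitution.

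Concretely, I would begin from \eqref{eq:prelimReg}, which gives
\[
\mathrm{reg}(S/I_{w_\lambda}) \;=\; \deg \mathfrak{G}_{w_\lambda}(x_1,\ldots,x_n) - |D(w_\lambda)|.
\]
Since $w_\lambda$ is Grassmannian, $\mathfrak{G}_{w_\lambda}=\mathfrak{G}_\lambda$ only involves the variables $x_1,\ldots,x_k$ (where $k$ is the unique descent), so the relevant degree is the one computed in Theorem~\ref{thm:Grass}. Substituting the expression
\[
\deg \mathfrak{G}_\lambda \;=\; |\lambda| + \sum_{h\in[r-1]} |P_{h+1}|\cdot \sylv{\lambda^{(h)}}
\]
and using \eqref{eq:sameLen} to replace $|D(w_\lambda)|$ by $|\lambda|$, the two $|\lambda|$ terms cancel, leaving exactly the claimed sum.

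There is essentially no obstacle here: the nontrivial content has already been absorbed into Theorem~\ref{thm:Grass} (which controls the top degree of $\mathfrak{G}_\lambda$ via the maximal partition $\mu$ and Proposition~\ref{prop:muTriang}) and into the Knutson--Miller identification $\mathcal{K}(S/I_w;t)=\mathfrak{G}_w(1-t,\ldots,1-t)$ together with the Cohen--Macaulayness of matrix Schubert varieties, both of which are invoked in deriving \eqref{eq:prelimReg}. The only small point to mention explicitly is that $\deg \mathfrak{G}_w(1-t,\ldots,1-t) = \deg \mathfrak{G}_w(x_1,\ldots,x_n)$ because the coefficients of the homogeneous components of a single Grothendieck polynomial have consistent signs (so no cancellation occurs upon substitution), a fact already noted in the discussion preceding \eqref{eq:prelimReg}. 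Thus the proof is a direct application of Theorem~\ref{thm:Grass} to \eqref{eq:prelimReg}.
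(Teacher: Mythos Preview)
Your proposal is correct and follows exactly the paper's own argument: the corollary is obtained by substituting Theorem~\ref{thm:Grass} into \eqref{eq:prelimReg} and cancelling $|\lambda|=|D(w_\lambda)|$ via \eqref{eq:sameLen}. The additional remarks you make about the sign consistency and Cohen--Macaulayness are precisely the justifications the paper already records in deriving \eqref{eq:prelimReg}, so nothing further is needed.
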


\begin{proof}
This is immediate from Theorem~\ref{thm:Grass}, Equation \eqref{eq:prelimReg}, and Equation \eqref{eq:sameLen}.
\end{proof}

\begin{example}
Continuing Example~\ref{ex:thmCalc}, Corollary~\ref{cor:grReg} states that ${\rm reg}(S/I_{w_\lambda})=18$.
\end{example}

\begin{example}\label{eg:det}
The ideal of $(r+1)\times (r+1)$ minors of a generic $n\times m$ matrix is the Schubert determinantal ideal of a Grassmannian permutation $w\in S_{n+m}$. Indeed, $w$ is the permutation of minimal length in $S_{n+m}$ such that $\text{rank }w_{[n,m]} = r$.

The corresponding partition is $\lambda=(m-r)^{(n-r)}0^r$.  We have $\lambda^{(1)}=(m-r)^{(n-r)}$ and so $\sylv{\lambda^{(1)}}=\min\{m-r,n-r\}$.  Furthermore, $|P_2|=r$.
Therefore, \[{\rm reg}(S/I_w)=r\cdot\min\{m-r,n-r\}=r\cdot(\min\{m,n\}-r).\]
\end{example}

We claim no originality for the formula in Example~\ref{eg:det}; minimal free resolutions of ideals of $r\times r$ minors of a generic $n\times m$ matrix are well-understood (see \cite{Lascoux78} or \cite[Chapter~6]{Weyman}). 

\section{On the regularity of coordinate rings of Grassmannian Schubert varieties intersected with the opposite big cell}

	In this section, we discuss a conjecture of Kummini-Lakshmibai-Sastry-Seshadri \cite{KLSS} on Castelnuovo-Mumford regularity of coordinate rings of certain open patches of Grassmannian Schubert varieties. We provide a counterexample to the conjecture, and then we state and prove an alternate explicit formula for these regularities. We end with a conjecture on regularities of coordinate rings of standard open patches of \emph{arbitrary} Schubert varieties in Grassmannians. 

\subsection{Grassmannian Schubert varieties in the opposite big cell}

Fix $k\in [n]$ and let $Y$ denote the space of $n\times n$ matrices of the form
\begin{equation}\label{eq:Y}
\begin{bmatrix} M & I_{k}\\ I_{n-k} &0\end{bmatrix},
\end{equation}
where $M$ is a $k\times (n-k)$ matrix with entries in $\mathbb{C}$ and $I_{k}$ is a $k\times k$ identity matrix. 
Let $P\subseteq GL_n(\mathbb{C})$ denote the maximal parabolic of block lower triangular matrices with block rows of size $k, (n-k)$ (listed from top to bottom). Then the  Grassmannian of $k$-planes in $n$-space, $Gr(k,n)$, is isomorphic to $P\backslash GL_n(\mathbb{C})$. Further, the map $\pi: GL_n(\mathbb{C})\rightarrow Gr(k,n)$ given by taking a matrix to its coset mod $P$ induces an isomorphism from $Y$ onto an affine open subvariety $U$ of $Gr(k,n)$ (often called the opposite big cell). 

Let $B\subseteq GL_n(\mathbb{C})$ be the Borel subgroup of upper triangular matrices. \textbf{Schubert varieties} $X_w$ in $P\backslash GL_n(\mathbb{C})$ are closures of orbits $P\backslash PwB$, where $w\in S_n$ is a Grassmannian permutation with descent at position $k$. Let $Y_w$ denote the affine subvariety of $Y$ defined to be $\pi|_Y^{-1}(X_w\cap U)$.

Let $\widetilde{Y}$ denote the matrix that has the form given in \eqref{eq:Y} with variables $m_{ij}$ as the entries of $M$. Then, the coordinate ring of $Y$ is $\mathbb{C}[Y] = \mathbb{C}[m_{ij}\mid i\in[k], j\in[n-k]]$, and the prime defining ideal $J_w$ of $Y_w$ is generated by the essential minors of $\widetilde{Y}$. 
That is, 
\begin{equation}
J_w = \langle (r_w(i,j)+1)-\text{size minors of }\widetilde{Y}_{[i,j]}\mid (i,j)\in \mathcal{E}ss(w)  \rangle.
\end{equation}

\subsection{A conjecture, counterexample, and correction}

We now consider a conjecture of Kummini-Lakshmibai-Sastry-Seshadri from \cite{KLSS} on regularities of coordinate rings of standard open patches of certain Schubert varieties in Grassmannians. We show that this conjecture is false by providing a counterexample, and then state and prove an alternate explicit combinatorial formula for these regularities. This latter result follows immediately from our Corollary~\ref{cor:grReg}.

To state the conjecture from \cite{KLSS}, we first translate the conventions from their paper to ours. 
Indeed, we use the same notation as the previous section and assume that $w\in S_n$ is a Grassmannian permutation with unique descent at position $k$. 
Suppose that $w = w_1~w_2\cdots w_n$ in one-line notation. Observe that $w$ is uniquely determined from $n$ and $(w_1,\dots, w_k)$. Suppose further that for some $r\in[k-1]$, 
\begin{equation}\label{eq:conjGrassCond}
    w_{k-r+i}=n-k+i \ \text{ for all }  i\in[r]
\end{equation}
and $w_1=1$.
Let $\widetilde{w}$ be defined by  $(\widetilde{w}_1,\dots,\widetilde{w}_k)= (n-w_k+1,\dots,n-w_1+1)$.  
Then we have 
\[(\widetilde{w}_1,\dots,\widetilde{w}_k) = (k-r+1,k-r+2,\dots,k, a_{r+1},\dots, a_{n-1}, n)\] 
for some $k<a_{r+1}<\cdots<a_{n-1}<n$.  Let $a_r = k$ and $a_k = n$. For $r \leq i \leq k-1$,
define $m_i = a_{i+1}-a_i$.

\begin{conjecture}[{\cite[Conjecture 7.5]{KLSS}}]
\label{conj:KLSS}
\begin{equation}\label{eq:conj}
\text{reg}(\mathbb{C}[Y]/J_w) = \sum_{i=r}^{k-1}(m_i-1)i.
\end{equation}
\end{conjecture}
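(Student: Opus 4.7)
The plan is to reduce to Corollary~\ref{cor:grReg} by showing $\text{reg}(\mathbb{C}[Y]/J_w)=\text{reg}(S/I_w)$, and then verify that the resulting formula matches the conjectured sum.

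First, because $w$ is Grassmannian with descent at $k$, its Rothe diagram (and hence $\mathcal{E}ss(w)$) is contained in the top-left $k\times(n-k)$ block. Writing $R=\mathbb{C}[x_{ij}\mid i\in[k],\,j\in[n-k]]$ and $I_w'=I_w\cap R$, the generators of $I_w$ all lie in $R$, and the remaining variables of $S$ form a regular sequence modulo $I_w$, so $\text{reg}(S/I_w)=\text{reg}(R/I_w')$. The essential minors of $\widetilde{Y}_{[i,j]}$ for $(i,j)\in\mathcal{E}ss(w)$ involve only entries of $\widetilde{Y}$ inside the top-left $k\times(n-k)$ block, i.e.\ the variables $m_{ij}$, so the identification $x_{ij}\leftrightarrow m_{ij}$ gives $\mathbb{C}[Y]/J_w\cong R/I_w'$. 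Combining these, $\text{reg}(\mathbb{C}[Y]/J_w)=\text{reg}(S/I_w)$.

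Next, apply Corollary~\ref{cor:grReg} to express $\text{reg}(\mathbb{C}[Y]/J_w)$ in terms of the partition $\lambda$ of $w$ and its rectangle decomposition. The conjecture's hypotheses $w_1=1$ and $w_{k-r+i}=n-k+i$ for $i\in[r]$ translate into structural constraints on $\lambda$: the former forces $\lambda_k=0$, and the latter forces $\lambda_1=\cdots=\lambda_r=n-k$. Under these constraints, the parameters $m_i=a_{i+1}-a_i$ from the conjecture encode the jumps in the column lengths of $\lambda$ from row $r$ onward.

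The main obstacle is the final combinatorial identity: showing the formula from Corollary~\ref{cor:grReg} equals $\sum_{i=r}^{k-1}(m_i-1)i$. I would unpack both sides as sums indexed by the rectangles in $R(\lambda)$ and attempt a term-by-term match. However, the Sylvester-triangle size $\sylv{\lambda^{(h)}}$, computed via the recursion in the proof of Proposition~\ref{prop:muTriang}, is capped by the vertical length of $\lambda^{(h)}$, whereas the conjectured sum contains no such cap. I therefore expect a genuine discrepancy precisely when this cap is binding, i.e.\ when some rectangle $R_h$ is sufficiently taller than the cumulative widths of its predecessors. Testing ``L-shaped'' $\lambda$---with one tall, narrow piece followed by a short, wide piece---should yield an explicit counterexample. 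If so, the correct formula is the one obtained directly from Corollary~\ref{cor:grReg} by translating the conjecture's structural hypotheses on $\lambda$ into Sylvester-triangle data.
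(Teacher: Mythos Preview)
Your proposal reaches the correct conclusion---the conjecture is false---and does so by essentially the same route as the paper: reduce $J_w$ to the Schubert determinantal ideal $I_w$, invoke Corollary~\ref{cor:grReg}, and observe that the Sylvester-triangle formula cannot match the conjectured sum because of the length cap in $\sylv{\lambda^{(h)}}$. The paper's treatment differs only in that it carries out the last step explicitly, exhibiting $w=1\,4\,5\,7\,2\,3\,6\,8\,9\,(10)$ (so $\lambda=(3,2,2,0)$), for which Corollary~\ref{cor:grReg} gives regularity $5$ while the conjectured sum gives $7$.

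One small correction: your first paragraph asserts that $\mathcal{E}ss(w)\subseteq[k]\times[n-k]$ merely because $w$ is Grassmannian with descent at $k$. That is not true in general (e.g.\ $w=2413$ has $(2,3)\in D(w)$). What forces the essential set into the top-left $k\times(n-k)$ block is precisely the conjecture's hypothesis $w_{k-r+i}=n-k+i$ for $i\in[r]$, as the paper notes; you should cite that hypothesis at this step rather than Grassmannian-ness alone. With that fix, your reduction $\mathrm{reg}(\mathbb{C}[Y]/J_w)=\mathrm{reg}(S/I_w)$ is sound and matches the paper's.
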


\begin{example}
We consider \cite[Example 6.1]{KLSS}. 
Let $J$ be the ideal  generated by $3\times 3$ minors of a $4\times 3$ matrix of indeterminates. Then $J = J_w$ for $w = 1245367\in S_7$, where $k = 4$ and $n=7$. Then $\widetilde{w} = (3,4,6,7)$. Here we see that Equation (\ref{eq:conj}) yields a regularity of 2. This matches the regularity we computed in Example~\ref{eg:det}.  
\end{example}

We now show that Conjecture~\ref{conj:KLSS} is not always true. 

\begin{example}\label{eg:counter}
Let $k = 4$, $n=10$, $w = 145723689(10)$ so that $\widetilde{w} = (4,6,7,10)$. Then $\widetilde{w}$ has the desired form. Furthermore, we have that $m_1 = 2, m_2 = 1, m_3 = 3$. Thus, by Conjecture~\ref{conj:KLSS}, the regularity should be $(2-1)1+(1-1)2+(3-1)3 = 1+6 = 7$. However, a check in Macaulay2 \cite{M2} yields a regularity of $5$. In fact, $J_w$, once induced to a larger polynomial ring, is a Schubert determinantal ideal for $w$, so we can use our formula from Corollary~\ref{cor:grReg}.  Notice $w$ has associated partition $\lambda=(3,2,2,0)$.  Then $\lambda^{(1)}=(1)$ and $\lambda^{(2)}=(3,2,2)$, giving $\text{reg}(\mathbb{C}[Y]/J_w)=2\cdot\sylv{\lambda^{(1)}}+1\cdot\sylv{\lambda^{(2)}}=2\cdot 1+1\cdot3=5$.
\end{example}

As illustrated in Example~\ref{eg:counter}, our formula for the regularity of a Grassmannian matrix Schubert variety given in Corollary~\ref{cor:grReg} corrects Conjecture~\ref{conj:KLSS} whenever the ideal $J_w$ is equal (up to inducing the ideal to a larger ring) to the Schubert determinantal ideal $I_w$. In fact, each Grassmannian permutation considered in \cite[Conjecture 7.5]{KLSS} is of this form. This follows because all the essential set of such $w$ is contained in $w_{[k,n-k]}$ 
by Equation (\ref{eq:conjGrassCond}).

\begin{corollary}
Let $w_\lambda \in S_n$ be a Grassmannian permutation with descent at position $k$ such that $w_1=1$ and for some $r\in[k-1]$, $w_{k-r+i}=n-k+i$ for $i\in[r]$.
Let $P(\lambda)=(P_1,\ldots,P_{r})$. Then
		\[{\rm reg}(\mathbb{C}[Y]/J_{w_\lambda})=\sum_{h\in [r-1]}|P_{h+1}|\cdot  \sylv{\lambda^{(h)}}.\]
\end{corollary}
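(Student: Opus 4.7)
The plan is to reduce the corollary directly to Corollary~\ref{cor:grReg}. The strategy is to show that under the hypotheses, the ideal $J_{w_\lambda}$ in $\mathbb{C}[Y]$ agrees, after adjoining free polynomial variables, with the Schubert determinantal ideal $I_{w_\lambda}$ in $S$, so that the two rings $\mathbb{C}[Y]/J_{w_\lambda}$ and $S/I_{w_\lambda}$ have the same Castelnuovo-Mumford regularity.

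First I would verify that $\mathcal{E}ss(w_\lambda) \subseteq [k]\times[n-k]$. Since $w_\lambda$ is Grassmannian with descent at $k$, $D(w_\lambda)$ has no cells in rows $>k$. The trailing hypothesis $w_{k-r+i}=n-k+i$ for $i\in[r]$, combined with the Grassmannian property, forces $\{w_{k-r+1},\ldots,w_k\}=\{n-k+1,\ldots,n-k+r\}$ and $w_i\le n-k$ for $i\le k-r$. A short case analysis on the row index then bounds the columns of $D(w_\lambda)$ by $n-k$: in rows $i\le k-r$, any $(i,j)\in D(w_\lambda)$ satisfies $j<w_i\le n-k$; in rows $i\in\{k-r+1,\ldots,k\}$, the columns $j$ of cells lie in the complement of $\{w_1,\ldots,w_k\}$, which excludes $\{n-k+1,\ldots,n-k+r\}$, and since $j<w_i\le n-k+r$ we get $j\le n-k$. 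The hypothesis $w_1=1$ is not used in this analysis and is retained only to match the setup of Conjecture~\ref{conj:KLSS}.

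With this containment established, for each $(i,j)\in\mathcal{E}ss(w_\lambda)$ the submatrix $\widetilde{Y}_{[i,j]}$ is literally the top-left $i\times j$ submatrix of the $M$-block of $\widetilde{Y}$, so matching $m_{ab}\leftrightarrow x_{ab}$ on $[k]\times[n-k]$ identifies the essential minor generators of $J_{w_\lambda}$ with those of $I_{w_\lambda}$. Because $I_{w_\lambda}$ involves no other variables, I obtain
\[
S/I_{w_\lambda} \;\cong\; (\mathbb{C}[Y]/J_{w_\lambda})[\,x_{ab} : (a,b)\notin [k]\times[n-k]\,].
\]
Adjoining polynomial variables is a flat extension that preserves the graded Betti numbers, hence $\text{reg}(S/I_{w_\lambda})=\text{reg}(\mathbb{C}[Y]/J_{w_\lambda})$, and the formula now follows from Corollary~\ref{cor:grReg}. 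I do not expect any serious obstacle; the only mildly technical point is the essential-set inclusion, which is an elementary combinatorial check from the hypotheses.
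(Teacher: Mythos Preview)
Your proposal is correct and follows exactly the approach the paper takes: the paragraph immediately preceding the corollary explains that the hypothesis on $w_\lambda$ forces $\mathcal{E}ss(w_\lambda)\subseteq [k]\times[n-k]$, so $J_{w_\lambda}$ coincides (after extending to the larger polynomial ring) with the Schubert determinantal ideal $I_{w_\lambda}$, and Corollary~\ref{cor:grReg} applies. You supply more detail on the essential-set containment than the paper does, and you correctly note that $w_1=1$ is not needed for this reduction; one small wording slip is that in rows $i\in\{k-r+1,\ldots,k\}$ the diagram condition only gives $j\notin\{w_1,\ldots,w_i\}$, not $j\notin\{w_1,\ldots,w_k\}$, but since $j<w_i$ this already excludes $\{n-k+1,\ldots,w_i-1\}$ and your conclusion $j\le n-k$ stands.
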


\subsection{A conjecture for the general case}
We end the paper with a conjecture for the regularity of $\mathbb{C}[Y]/J_w$ where $w$ is an \emph{arbitrary} Grassmannian permutation with descent at position $k$. We begin with some preliminaries. 

First note that $\mathbb{C}[Y]/J_w$ is a standard graded ring. Indeed, the torus $T\subseteq GL_n(\mathbb{C})$ of diagonal matrices acts on $U$ and on  $X_w\cap U$ by right multiplication. This action induces a $\mathbb{Z}^n$-grading on $\mathbb{C}[Y]$ such that $m_{ij}$ has degree $\vec{e}_i-\vec{e}_j$ and $J_w$ is homogeneous. 
This $\mathbb{Z}^n$-grading can be coarsened to the standard $\mathbb{Z}$-grading because the $T$-action contains the dilation action\footnote{More generally, coordinate rings of Kazhdan-Lusztig varieties $X_w\cap X^v_\circ \subseteq B_-\backslash GL_n(\mathbb{C})$ are standard graded when $v$, the permutation defining the opposite Schubert cell $X^v_\circ = B_-\backslash B_-vB_-$, is $321$-avoiding. See \cite[pg. 25]{Knutson-Frob} or \cite[Section~4.1]{WooYongGrobner} for further explanation.}: embed $\mathbb{C}^\times\hookrightarrow T$ by sending $z\in \mathbb{C}^\times$ to the diagonal matrix that has its $(i,i)$-entry equal to $1$ when $1\leq i\leq n-k$ and equal to $z$ when $n-k+1\leq i\leq n$. 

The codimension of $Y_w$ in $Y$ is equal to the number of boxes in the diagram $D(w)$. So, to compute the regularity $\text{reg}(\mathbb{C}[Y]/J_w)$, it remains to find the degree of the $K$-polynomial of $\mathbb{C}[Y]/J_w$. 
By \cite[Theorem~4.5]{WooYongGrobner}, this $K$-polynomial can be expressed in terms of a \mydef{double Grothendieck polynomial}, $\mathfrak G_w({\mathbf x};{\mathbf y})$, which is defined as follows: 
\[\mathfrak G_{w_0}({\mathbf x};{\mathbf y})=\prod_{i+j\leq n}(x_i+y_j-x_iy_j).\]
The rest are defined recursively, using the same operator $\pi_i$ and recurrence defined in Section~\ref{sec:intro}.
Note that if $G_w(\mathbf{x};\mathbf{y})$ denotes the double Grothendieck polynomials in \cite{KM}, we have $G_w(\mathbf{x};\mathbf{y})=\mathfrak G_w({\mathbf 1-\mathbf x};{\mathbf 1-\frac{\mathbf 1}{\mathbf y}})$.

Let $\mathbf{c} = ((1-t),(1-t),\dots, (1-t), 0,0, \dots, 0)$ be the list consisting of $k$ copies of $1-t$ followed by $n-k$ copies of $0$, and let $\mathbf{\tilde{c}} = (0,0,\dots,0, 1-\frac{1}{t}, 1-\frac{1}{t},\dots, 1-\frac{1}{t})$ be the list consisting of $n-k$ copies of $0$ followed by $k$ copies of $1-\frac{1}{t}$. By \cite[Theorem~4.5]{WooYongGrobner}, the $K$-polynomial of $S/J_w$ is the specialized double Grothendieck polynomial $\mathfrak{G}_w(\bf{c}; \tilde{\bf{c}})$\footnote{The conventions used in \cite{WooYongGrobner} differ from ours, so the given formula is a translation of their formula to our conventions.}. Consequently, we are reduced to computing the degree of this polynomial.

\begin{example}
Let $w=132$ and $k=2$.  Then 
\[
\mathfrak G_w(\mathbf x;\mathbf y)=(x_2+y_1-x_2y_1)+(x_1+y_2-x_1y_2)-(x_1+y_2-x_1y_2)(x_2+y_1-x_2y_1).
\]
Letting $\mathbf{c} = (1-t, 1-t, 0)$ and $\widetilde{\mathbf{c}} = (0, 1-\frac{1}{t}, 1-\frac{1}{t})$, one checks that $\mathfrak G_w(\mathbf{c};\widetilde{\mathbf{c}})=(1-t)$ which is the $K$-polynomial of $S/J_w$ with respect to the standard grading. 
\end{example}

For the reader familiar with pipe dreams (see, e.g.\ \cite{FominKrillov} and \cite{KM.Subword}), we note  that the degree of $\mathfrak{G}_w(\bf{c}; \tilde{\bf{c}})$ is the maximum number of plus tiles in a (possibly non-reduced) pipe dream for $w$ with all of its plus tiles supported within the northwest justified $k\times (n-k)$ subgrid of the $n\times n$ grid. This follows from \cite{WooYongGrobner}. However, this is not a very explicit formula for degree.

We now turn to our conjecture. It asserts that the degree of the $K$-polynomial of $\mathbb{C}[Y]/J_w$ for a Grassmannian permutation $w\in S_n$ with descent at position $k$ can be computed in terms of the degree of a Grothendieck polynomial of an associated \emph{vexillary} permutation. This will be a much more easily computable answer than a pipe dream formula because the first, third, and fifth authors will give an explicit formula for degrees of vexillary Grothendieck polynomials in the sequel.

A permutation $w\in S_n$ is \textbf{vexillary} if it contains no $2143$-pattern, i.e.\ there are no $i<j<k<l$ such that $w_j<w_i<w_l<w_k$. For example, $w=\underline{3}25\underline{1}\underline{6}\underline{4}$ is not vexillary since it contains the underlined the $2143$ pattern.

Suppose $w_\lambda\in S_n$ is Grassmannian with descent $k$. Define $\lambda'=(\lambda_1,\ldots,\lambda_{\ell(\lambda)})$ and $\phi(\lambda)=(\phi_1,\ldots,\phi_{\ell({\lambda})})$  as follows.
For $i\in [\ell({\lambda})]$, 
\begin{equation*}
    \phi_{i} =
    \begin{cases}
      i+\min\{(n-k)-\lambda_i,k-i\}       & \mbox{ if } \lambda_i>\lambda_{i+1} \mbox{ or } i=\ell({\lambda}),\\
      \phi_{i+1} & \mbox{ otherwise.}  \\
       
    \end{cases}
  \end{equation*}
A vexillary permutation $v$ is determined by the statistics of a partition and a flag, computed using $D(v)$ (see \cite[Proposition~2.2.10]{Manivel}). Thus, the partition $\lambda'$ and flag $\phi$ defined above from $w_{\lambda}$ define at most one vexillary permutation. 
  
\begin{conjecture}\label{conj:vex}
  Fix $w_{\lambda}\in S_n$ Grassmannian with descent $k$. 
Then $\lambda', \phi(\lambda)$ define a vexillary permutation $v$, and ${\rm deg}({\mathfrak G}_{w_\lambda}({\bf c}; \tilde{{\bf c}}))={\rm deg}({\mathfrak G}_v({\bf x}))$. In particular, $\text{reg}(\mathbb{C}[Y]/J_{w_\lambda})={\rm deg}({\mathfrak G}_v({\bf x}))-|\lambda|$.
\end{conjecture}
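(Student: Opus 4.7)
The plan is to prove the statement in two stages: first, verify that $(\lambda',\phi(\lambda))$ determines a (necessarily unique) vexillary permutation $v$; second, establish the degree equality ${\rm deg}(\mathfrak{G}_{w_\lambda}({\bf c};\tilde{{\bf c}})) = {\rm deg}(\mathfrak{G}_v({\bf x}))$. Granting these, the regularity formula ${\rm reg}(\mathbb{C}[Y]/J_{w_\lambda}) = {\rm deg}(\mathfrak{G}_v({\bf x})) - |\lambda|$ follows from \cite[Theorem~4.5]{WooYongGrobner}, which identifies $\mathfrak{G}_{w_\lambda}({\bf c};\tilde{{\bf c}})$ with the $K$-polynomial of $\mathbb{C}[Y]/J_{w_\lambda}$, together with Equation~\eqref{eq:mainRegEquation} and Equation~\eqref{eq:sameLen}; Cohen-Macaulayness of $\mathbb{C}[Y]/J_{w_\lambda}$ is standard.

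For the first stage, by \cite[Proposition~2.2.10]{Manivel} a pair $(\mu,\psi)$ determines a vexillary permutation if and only if $\psi$ is weakly increasing and $\psi_i \geq i$ whenever $\mu_i > 0$. The inequality $\phi_i \geq i$ is immediate from the nonnegativity of the minimum: since $w_\lambda$ is Grassmannian with descent at $k$, one has $\lambda_1 \leq n-k$ and $\ell(\lambda) \leq k$, so both $(n-k)-\lambda_i \geq 0$ and $k-i \geq 0$ hold for all $i \in [\ell(\lambda)]$. Monotonicity of $\phi$ at each block boundary (where $\lambda_i > \lambda_{i+1}$) reduces to four subcases depending on which clause of the minimum is active at indices $i$ and $i+1$; the subcase where $\phi_i = k$ and $\phi_{i+1}$ is given by the first clause is vacuous (the required inequalities contradict $\lambda_i > \lambda_{i+1}$), and the other three follow from $\lambda_i \geq \lambda_{i+1}+1$ by direct computation.

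The main work is the degree equality. We plan to use two combinatorial interpretations. On one hand, by the pipe dream formula for double Grothendieck polynomials specialized at $({\bf c};\tilde{{\bf c}})$, as noted in the excerpt following the conjecture's preamble, ${\rm deg}(\mathfrak{G}_{w_\lambda}({\bf c};\tilde{{\bf c}}))$ equals the maximum number of plus-tiles in a (possibly non-reduced) pipe dream for $w_\lambda$ all of whose plus-tiles lie in the northwest $k\times(n-k)$ subgrid. On the other hand, writing $\mathfrak{G}_v({\bf x})$ via the Knutson--Miller--Yong flagged set-valued tableau formula for vexillary permutations (or the explicit sequel formula alluded to just before the conjecture statement), ${\rm deg}(\mathfrak{G}_v({\bf x}))$ equals the size of the largest set-valued semistandard tableau of shape $\lambda$ with entries in row $i$ bounded by $\phi_i$. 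The core of the argument is a bijection between these two families of combinatorial objects preserving the relevant statistic, generalizing the pipe dream-to-tableau correspondence underlying Lenart's proof of Theorem~\ref{thm:gGrass}.

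The main obstacle is verifying that the NW-support constraint on plus-tiles translates precisely to the row-bound $\phi_i$ on tableau entries. Geometrically, the two clauses in the definition of $\phi_i$ correspond to the two ways a diagonal of plus-tiles anchored in row $i$ can become blocked: against the vertical boundary at column $n-k$ (giving $\phi_i = i + (n-k) - \lambda_i$) or against the diagonal barrier formed by the pipes forced to exit through the first $k$ columns (giving $\phi_i = k$). We anticipate that the rectangle decomposition $R(\lambda)$ and the Sylvester-triangle arithmetic from the proof of Proposition~\ref{prop:muTriang} will govern this correspondence, so that the maximal tableau on the vexillary side can be written down explicitly in analogy with Lemma~\ref{lemma:maxLambdaRec} and matched against the maximal NW-supported pipe dream on the $w_\lambda$ side by a row-by-row comparison.
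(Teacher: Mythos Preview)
The paper does not prove this statement: it is stated as Conjecture~\ref{conj:vex}, and the sentence immediately following it says explicitly that ``the first, third, and fifth authors will prove this in the sequel.'' So there is no proof in this paper to compare your proposal against.

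As for your proposal itself, it is a plan rather than a proof. Your first stage is the most concrete, but even there the characterization you attribute to \cite[Proposition~2.2.10]{Manivel} is not quite right: the conditions for a shape and flag to come from a vexillary permutation are not simply ``$\phi$ weakly increasing and $\phi_i\geq i$''; one also needs a compatibility of the type $\lambda_i+\phi_i\leq \lambda_{i-1}+\phi_{i-1}$ (or an equivalent formulation), and you have not checked this. Your monotonicity case analysis is asserted but not carried out.

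Your second stage contains no argument at all, only intentions (``we plan to use,'' ``the core of the argument is a bijection,'' ``we anticipate''). The heart of the conjecture is precisely the degree equality, and the difficulty is exactly the one you name but do not resolve: matching the NW-support constraint on pipe dreams to the flag bound $\phi_i$. Invoking the Knutson--Miller--Yong flagged set-valued tableau formula and hoping for a statistic-preserving bijection is a reasonable strategy, but nothing here shows why the two clauses in the definition of $\phi_i$ are the correct translation of the boundary obstructions, nor why the maximal objects on each side have the same size. This is the substance of what the sequel paper would have to supply, and it is absent from your write-up.
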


While we state this as a conjecture here, the first, third, and fifth authors will prove this in the sequel and furthermore give an explicit combinatorial formula for ${\rm deg}({\mathfrak G}_v({\bf x}))$, as mentioned above.

\begin{example}
Let $k=5,n=10$ and $w_\lambda=1489(10)23567$. Then $\lambda'=(5,5,5,2)$ and $\phi(w_\lambda)=(3,3,3,5)$, which corresponds to the vexillary permutation $v=678142359(10)$.
Thus Conjecture~\ref{conj:vex} states that ${\rm deg}({\mathfrak G}_{w_\lambda}({\bf c}; \tilde{{\bf c}}))={\rm deg}({\mathfrak G}_v({\mathbf x}))=18$, so $\text{reg}(\mathbb{C}[Y]/J_{w_\lambda})=18-17=1$.

To compute this regularity directly, take $R = \mathbb{C}[Y] = \mathbb{C}[m_{ij} \mid 1\leq i,j\leq 5]$. Let $G$ denote the set of $2\times 2$ minors of $\begin{bmatrix} m_{11} & m_{12} &m_{13}\\ m_{21}&m_{22}&m_{23}\end{bmatrix}$, and let $H$ be the set of entries in the bottom three rows of the matrix of variables $M = (m_{ij})_{1\leq i,j \leq 5}$. Then $G\cup H$ is a minimal generating set of $J_{w_\lambda}$. The Eagon-Northcott complex is a minimal free resolution of $R/\langle G\rangle$:
\[
0\rightarrow R(-3)^2\rightarrow R(-2)^3\rightarrow R\rightarrow R/\langle G\rangle \rightarrow 0.
\]
From this, one directly observes that the regularity of the $R$-module $R/\langle G\rangle$ is $1$. Modding out $R/\langle G\rangle$ by the linear forms in $H$ does not change the regularity (see, e.g.\ \cite[Proposition~20.20]{EisenbudCA}), and hence the regularity of $R/J_{w_\lambda}$ is also $1$. 
\end{example}

\section*{Acknowledgements}
The authors would like to thank Daniel Erman, Reuven Hodges, Patricia Klein, Claudiu Raicu, Alexander Yong, and the anonymous referee for their helpful comments and conversations.


\begin{thebibliography}{99}


\bibitem{Benedetti.Varbaro} B.~Benedetti and M.~Varbaro, \emph{On the dual graphs of {C}ohen-{M}acaulay algebras}, Int. Math. Res. Not. IMRN., 17 (2015), pp. 8085--8115.



\bibitem{EisenbudCA} D.~Eisenbud, \emph{Commutative algebra with a view toward algebraic geometry}, Vol. 150, Graduate Texts in Mathematics, Springer-Verlag, New York (1995), pp. xvi+785.


    
\bibitem{FominKrillov} S.~Fomin and  A.~N.~Kirillov, \emph{Grothendieck polynomials and the {Y}ang-{B}axter equation}, Formal power series and algebraic combinatorics/{S}\'{e}ries
formelles et combinatoire alg\'{e}brique, DIMACS, Piscataway, NJ (1994), pp. 183--189.   





\bibitem{Fulton.Flags} F.~Fulton, \emph{Flags, Schubert polynomials, degeneracy loci, and determinantal formulas}, Duke Math. J., 65 (1992), no. 3, pp. 381--420.




\bibitem{M2} D.~R.~Grayson and M.~E.~Stillman, \emph{Macaulay2, a software system for research in algebraic geometry}, Available at \url{http://www.math.uiuc.edu/Macaulay2/.}


\bibitem{KM} A.~Knutson and  E.~Miller, \emph{Gr\"{o}bner geometry of Schubert polynomials}, Ann. of Math., 161 (2005), no. 3, pp. 1245--1318.


\bibitem{KM.Subword} \bysame, \emph{Subword complexes in {C}oxeter groups}, Adv. Math., 184 (2004), no. 1, pp. 161--176.

  

\bibitem{KLSS} M.~Kummini, V.~Lakshmibai, P.~Sastry, and C.~S.~Seshadri, \emph{Free resolutions of some {S}chubert singularities}, Pacific J. Math., 279 (2015), no. 1-2, pp. 299--328.

\bibitem{Lascoux78} A.~Lascoux, \emph{Syzygies des vari\'{e}t\'{e}s d\'{e}terminantales}, Adv. in Math., 30 (1978), no. 3, pp. 202--237.


\bibitem{LS82} A.~Lascoux and M.~P.~Sch\"{u}tzenberger, \emph{Structure de Hopf de l'anneau de cohomologie et de l'anneau de Grothendieck d'une vari\'et\'e de drapeaux}, C.~R.~Acad.~Sci. Paris S\'{e}r.~I Math., 295 (1982), no. 11, pp. 629--633.


\bibitem{Le00} C.~Lenart, \emph{Combinatorial aspects of the K-theory of Grassmanians}, Ann. Comb., 4 (2000), no. 1, pp. 67--82.



\bibitem{Manivel} L.~Manivel, \emph{Symmetric functions, Schubert polynomials and degeneracy loci}, SMF/AMS Texts and Monographs, Translated from the 1998 French original by John R. Swallow, American Mathematical Society, Providence (2001). 


\bibitem{Miller.Sturmfels} E.~Miller and B.~Sturmfels, \emph{Combinatorial commutative algebra}, Vol. 227, Graduate Texts in Mathematics, Springer-Verlag, New York (2005), pp. xiv+417. 



\bibitem{Weyman} J.~Weyman, \emph{Cohomology of vector bundles and syzygies}, Vol. 149, Cambridge Tracts in Mathematics, Cambridge University Press, Cambridge (2003), pp. xiv+371. 

\bibitem{WooYongGrobner} A.~Woo and A.~Yong, \emph{A Gr\"obner basis for Kazhdan-Lusztig ideals}, Amer. J. Math., 134 (2012), no. 4, pp. 1089--1137.


\end{thebibliography}
\end{document}